\def\beqnn{\begin{eqnarray*}}\def\eeqnn{\end{eqnarray*}}
\newtheorem{theorem}{Theorem}[section]
\newtheorem{proposition}[theorem]{Proposition}
\theoremstyle{definition}
\theoremstyle{question}
\numberwithin{equation}{section}
\begin{document}

\title{Generalized Ces\`aro operators on Dirichlet-type spaces}

%\author{Huabing Li}
%    Address of record for the research reported here
%\address{School of Mathematics Sciences, Hefei University of Technology, Xuancheng Campus, Xuancheng 242000, P.R.China}
%    Current address
%\email{musicli121@163.com}

\author{Jianjun Jin}
%    Address of record for the research reported here
\address{School of Mathematics Sciences, Hefei University of Technology, Xuancheng Campus, Xuancheng 242000, P.R.China}
%    Current address
\email{jinjjhb@163.com, jin@hfut.edu.cn}

% \author{Xiaogao Feng}
%    Address of record for the research reported here
%   \address{College of Mathematics and Information, China West Normal University, Nanchong 637009, P.R.China}
%  %    Current address
 % \email{fengxiaogao603@163.com}

  \author{Shuan Tang}
  \address{School of Mathematics Sciences, Guizhou Normal University, Guiyang 550001,
  P.R.China} \email{tsa@gznu.edu.cn}

   % \author{Chunhua Li}
  %\address{School of Computer Science and Information Engineering, Hefei University of
%Technology, Hefei 230009, China} \email{lch2014@hfut.edu.cn}

%    \thanks will become a 1st page footnote.
\thanks{The first author supported by National Natural Science
Foundation of China (Grant Nos. 11501157).
% The second author was supported by National Natural Science Foundation of China (Grant
%Nos. 11701459). }
% The third author
 The second author was supported by National Natural Science Foundation of China (Grant
Nos. 12061022) and the foundation of Guizhou Provincial Science and
Technology Department (Grant Nos. [2017]7337 and [2017]5726).}
%The third author was supported by National Natural Science Foundation of China (Grant
%Nos. 11705043,
%11547137)
%    Information for second author
%\author{Author Two}
%\address{Mathematical Research Section, School of Mathematical Sciences,
%Australian National University, Canberra ACT 2601, Australia}
%\email{two@maths.univ.edu.au}
%\thanks{This paper has been accepted by  Acta Mathematica Scientia.}

%    General info
\subjclass[2010]{47B38; 31C25}

%\date{February 9, 2013.}

%\dedicatory{This paper is dedicated to our advisors.}

\keywords{Generalized Ces\`aro operator; Dirichlet-type spaces; Carleson measure; boundedness and compactness of operator.}

\begin{abstract}
In this note, we introduce and study a new kind of generalized Ces\`aro operators $\mathcal{C}_{\mu}$, induced by a positive Borel measure $\mu$ on $[0, 1)$, between the Dirichlet-type spaces. We characterize the measures $\mu$ for which $\mathcal{C}_{\mu}$ is bounded (compact) from one Dirichlet-type space $\mathcal{D}_{\alpha}$ into another one $\mathcal{D}_{\beta}$. \end{abstract}

\maketitle
\section{Introduction}

Let $\mathbb{D}$ be the unit disk in the complex plane $\mathbb{C}$.  We use $C, C_1, C_2, \cdots$ to denote universal positive constants that might change from one line to another.  For two positive numbers $A, B$, we write
$A \preceq B$, or $A \succeq B$, if there exists a positive constant $C$ independent of $A$ and $B$ such that $A \leq C B$, or $A \geq C B$, respectively. We will write $A \asymp B$ if both $A \preceq B$ and $A \succeq B$.

We denote by $\mathcal{H}(\mathbb{D})$ the class of all analytic functions on $\mathbb{D}$.  Let $0<p<\infty$, the Hardy space $H^p$ is the class of all $f\in \mathcal{H}(\mathbb{D})$  such that
$$\|f\|_{H^p}=\sup_{r\in (0, 1)}M_{p}(r, f)<\infty,$$
where $$M_{p}(r ,f)=\{\frac{1}{2\pi}\int_{0}^{2\pi}|f(re^{i\theta})|^p\,d\theta\}^{\frac{1}{p}},\:0<r<1.$$

For $\alpha\in \mathbb{R}$, the Dirichlet-type space, denoted by $\mathcal{D}_{\alpha}$, is defined as
\begin{equation*}
\mathcal{D}_{\alpha}=\{f=\sum_{n=0}^{\infty}a_nz^n\in \mathcal{H}(\mathbb{D}): \|f\|_{\mathcal{D}_{\alpha}}:=(\sum_{n=0}^{\infty}(n+1)^{1-\alpha} |a_{n}|^2)^{\frac{1}{2}}<\infty\}.\end{equation*}

When $\alpha=0$,  $\mathcal{D}_0$ coincides the classic Dirichlet space $\mathcal{D}$, and when $\alpha=1$, $\mathcal{D}_1$ becomes the Hardy space $H^2$.

The Ces\`aro operator, which is an operator on spaces of analytic functions by its action on the Taylor coefficients, is defined as, for $f=\sum_{n=0}^{\infty}a_n z^n \in \mathcal{H}(\mathbb{D})$,
\begin{equation*} \mathcal{C}(f)(z)=\sum_{n=0}^\infty \left(\frac{1}{n+1}\sum_{k=0}^{n} a_k \right)z^n, z\in \mathbb{D}.
\end{equation*}

The boundedness and compactness of the Ces\`aro operator and its generalizations defined on various spaces of analytic
functions, like Hardy spaces, Bergman spaces and Dirichlet spaces, have attracted many attentions (see for example, \cite{AC}, \cite{AS}, \cite{G}, \cite{MM}, \cite{St}, \cite{S1}, \cite{S2}, \cite{Xiao}, \cite{A} and the references therein).

In this note, we consider the boundedness and compactness of Ces\`aro operator between the Dirichlet-type spaces. We denote $\mathbb{N}_0=\mathbb{N}\cup \{0\}$.  When $0<\alpha<2$.  For $f=\sum_{n=0}^{\infty}a_n z^n \in \mathcal{D}_{\alpha}$, by Cauchy's inequality, we obtain that, for $n\in \mathbb{N}_0$,
\begin{eqnarray}\label{e-1}
\lefteqn{\left |\frac{1}{n+1}\sum_{k=0}^{n} a_k\right|=\frac{1}{n+1}\left|\sum_{k=0}^{n} \left[(k+1)^{\frac{2-\alpha}{4}}a_k\right]
\left[(k+1)^{-\frac{2-\alpha}{4}}\right]\right|}
 \\
&&\leq \frac{1}{n+1}\left[ \sum_{k=0}^{n} (k+1)^{\frac{2-\alpha}{2}} a_k^{2}\right]^{\frac{1}{2}}
\left[\sum_{k=0}^{n}(k+1)^{-\frac{2-\alpha}{2}}\right]^{\frac{1}{2}}. \nonumber
\end{eqnarray}

For $0<\alpha<2$, it is easy to see that
\begin{equation}\label{e-2}
\sum_{k=0}^{n}(k+1)^{-\frac{2-\alpha}{2}}=\sum_{j=1}^{n+1}j^{-\frac{2-\alpha}{2}}\leq \int_{0}^{n+1} t^{-\frac{2-\alpha}{2}}dt=\frac{2}{\alpha}(n+1)^{\frac{\alpha}{2}}.
\end{equation}
Consequently, we get from (\ref{e-1}) and (\ref{e-2}) that
\begin{eqnarray}\label{e-3}
\lefteqn{\|\mathcal{C}(f)\|_{\mathcal{D}_\alpha}=\left[\sum_{n=0}^{\infty}(n+1)^{1-\alpha}\left|\frac{1}{n+1}\sum_{k=0}^{n} a_k\right|^2\right]^{\frac{1}{2}}} \\
&&\leq \left(\frac{2}{\alpha}\right)^{\frac{1}{2}}\left[\sum_{n=0}^{\infty}\frac{1}{(n+1)^{\frac{2+\alpha}{2}}}\sum_{k=0}^{n}(k+1)^{\frac{2-\alpha}{2}}a_k^{2}\right]^{\frac{1}{2}}
\nonumber \\
&&=\left(\frac{2}{\alpha}\right)^{\frac{1}{2}}\left[\sum_{k=0}^{\infty}(k+1)^{1-\alpha}\sum_{n=k}^{\infty}\frac{(k+1)^{\frac{\alpha}{2}}}{(n+1)^{\frac{2+\alpha}{2}}}a_k^{2}\right]^{\frac{1}{2}}.
\nonumber
\end{eqnarray}

We notice that, for $k\in \mathbb{N}_0$,
\begin{eqnarray}\label{e-4}
\sum_{n=k}^{\infty}\frac{(k+1)^{\frac{\alpha}{2}}}{(n+1)^{\frac{2+\alpha}{2}}}&=& \frac{1}{k+1}+\sum_{n=k+1}^{\infty}\frac{(k+1)^{\frac{\alpha}{2}}}{(n+1)^{\frac{2+\alpha}{2}}}\\
&\leq & \frac{1}{k+1}+\int_{k}^{\infty}\frac{(k+1)^{\frac{\alpha}{2}}}{(t+1)^{\frac{2+\alpha}{2}}}\,dt\nonumber \\
&=& \frac{1}{k+1}+\frac{2}{\alpha}\nonumber \\ &\leq& \frac{2+\alpha}{\alpha}.
\nonumber
\end{eqnarray}

It follows from (\ref{e-3}) and (\ref{e-4}) that
\begin{eqnarray}\label{e-5}
\|\mathcal{C}(f)\|_{\mathcal{D}_\alpha}\leq \frac{\sqrt{2(2+\alpha)}}{\alpha}\|f\|_{\mathcal{D}_{\alpha}}.
\nonumber
\end{eqnarray}
This means that $\mathcal{C}: \mathcal{D}_{\alpha} \rightarrow \mathcal{D}_{\alpha}$ is bounded for $0<\alpha<2$. We have proved the following

\begin{proposition}\label{pro-1}
Let $0<\alpha<2$, then the Ces\`aro operator $\mathcal{C}$ is bounded from $\mathcal{D}_{\alpha}$ into itself.
\end{proposition}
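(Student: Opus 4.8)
The plan is to argue entirely at the level of Taylor coefficients, since both the norm of $\mathcal{D}_{\alpha}$ and the action of $\mathcal{C}$ are prescribed coefficientwise. Writing $f=\sum_{n=0}^{\infty}a_n z^n\in\mathcal{D}_{\alpha}$, the $n$-th Taylor coefficient of $\mathcal{C}(f)$ is $\frac{1}{n+1}\sum_{k=0}^{n}a_k$, so boundedness of $\mathcal{C}$ on $\mathcal{D}_{\alpha}$ is exactly the weighted discrete inequality
\[
\sum_{n=0}^{\infty}(n+1)^{1-\alpha}\Bigl|\tfrac{1}{n+1}\textstyle\sum_{k=0}^{n}a_k\Bigr|^{2}\preceq\sum_{n=0}^{\infty}(n+1)^{1-\alpha}|a_n|^{2}.
\]

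First I would bound the inner average by Cauchy's inequality after inserting the weight $(k+1)^{(2-\alpha)/4}$, that is, by splitting $a_k=\bigl[(k+1)^{(2-\alpha)/4}a_k\bigr]\bigl[(k+1)^{-(2-\alpha)/4}\bigr]$ as in (\ref{e-1}). This exponent is the crux of the whole argument and is forced by the bookkeeping below: upon squaring, the weighted factor contributes $(k+1)^{(2-\alpha)/2}a_k^{2}=(k+1)^{1-\alpha}(k+1)^{\alpha/2}a_k^{2}$, which is exactly the $\mathcal{D}_{\alpha}$-weight on $a_k^2$ times a correction $(k+1)^{\alpha/2}$ that must be absorbed later. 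Next I would estimate the unweighted factor $\sum_{k=0}^{n}(k+1)^{-(2-\alpha)/2}$ by comparison with $\int_{0}^{n+1}t^{-(2-\alpha)/2}\,dt$; since $0<\alpha<2$ the exponent lies in $(-1,0)$, the integral is finite at the origin, and it equals $\frac{2}{\alpha}(n+1)^{\alpha/2}$, which is (\ref{e-2}). This is the one place where the restriction $0<\alpha<2$ is genuinely used.

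Substituting these two estimates into the sum defining $\|\mathcal{C}(f)\|_{\mathcal{D}_{\alpha}}^{2}$ produces, as in (\ref{e-3}), the bound $\|\mathcal{C}(f)\|_{\mathcal{D}_{\alpha}}^{2}\preceq\sum_{n=0}^{\infty}(n+1)^{-(2+\alpha)/2}\sum_{k=0}^{n}(k+1)^{(2-\alpha)/2}a_k^{2}$. The decisive step is then to interchange the order of summation—legitimate since all terms are nonnegative—rewriting this double sum as $\sum_{k=0}^{\infty}(k+1)^{1-\alpha}a_k^{2}\bigl[(k+1)^{\alpha/2}\sum_{n=k}^{\infty}(n+1)^{-(2+\alpha)/2}\bigr]$. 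Finally I would show the bracketed tail is uniformly bounded in $k$: peel off the $n=k$ term and compare the remainder with $\int_{k}^{\infty}(k+1)^{\alpha/2}(t+1)^{-(2+\alpha)/2}\,dt=\frac{2}{\alpha}$, so the bracket is at most $\frac{1}{k+1}+\frac{2}{\alpha}\le\frac{2+\alpha}{\alpha}$, which is (\ref{e-4}). Putting the pieces together gives $\|\mathcal{C}(f)\|_{\mathcal{D}_{\alpha}}\le\frac{\sqrt{2(2+\alpha)}}{\alpha}\|f\|_{\mathcal{D}_{\alpha}}$, hence the asserted boundedness.

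I expect the only real obstacle to be getting the exponents to line up: one has to choose the Cauchy–Schwarz weight so that the powers of $(k+1)$ surviving after the two integral comparisons—one creating a gain of $(n+1)^{\alpha/2}$, the other supplying $(k+1)^{-\alpha/2}$ worth of decay from the tail—leave precisely $(k+1)^{1-\alpha}$, so that the right-hand side reassembles into $\|f\|_{\mathcal{D}_{\alpha}}^{2}$. Everything else, namely the two Abel-type integral comparisons and the Tonelli interchange for a nonnegative double series, is routine.
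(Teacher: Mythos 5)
Your argument is correct and is essentially identical to the paper's own proof: the same Cauchy--Schwarz splitting with weight $(k+1)^{(2-\alpha)/4}$, the same integral comparison giving $\frac{2}{\alpha}(n+1)^{\alpha/2}$, the same Tonelli interchange, and the same uniform tail bound $\frac{2+\alpha}{\alpha}$, yielding the identical constant $\frac{\sqrt{2(2+\alpha)}}{\alpha}$. Nothing to add.
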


It is natural to ask whether the Ces\`aro operator is still bounded from $\mathcal{D}_{\alpha}$ into $\mathcal{D}_{\beta}$, when $\alpha \neq \beta$.

We observe that the Ces\`aro operator $\mathcal{C}$ is not bounded from $\mathcal{D}_{\alpha}$ into $\mathcal{D}_{\beta}$, if $\alpha>\beta$ and $0<\alpha<2$. Actually, if $0<\alpha<2$ and $\alpha>\beta$, let $0<\varepsilon<\alpha$ and set $f= \sum_{n=0}^{\infty}a_n z^n$ with
$$a_n=\sqrt{\frac{\varepsilon}{1+\varepsilon}}(n+1)^{-\frac{2-\alpha+\varepsilon}{2}},\: n\in \mathbb{N}_0.$$
It is easy to see that $$\|f\|_{\mathcal{D}_\alpha}=\sqrt{\frac{\varepsilon}{1+\varepsilon}}[\sum_{n=0}^{\infty}(n+1)^{-1-\varepsilon}]^{\frac{1}{2}}\leq \sqrt{\frac{\varepsilon}{1+\varepsilon}}[1+\int_{1}^{\infty}t^{-1-\varepsilon}\,dt]^{\frac{1}{2}}=1.$$

Since $0<\alpha<2$ and $0<\varepsilon<\alpha$, we see that
\begin{eqnarray}\label{maod}\lefteqn{\|\mathcal{C}(f)\|_{\mathcal{D}_\beta}^2=\frac{\varepsilon}{1+\varepsilon}\sum_{n=0}^{\infty}(n+1)^{1-\beta}\left[\frac{1}{n+1}\sum_{k=0}^{n} (k+1)^{-\frac{2-\alpha+\varepsilon}{2}}\right]^2} \\ &&=\frac{\varepsilon}{1+\varepsilon}
\sum_{n=0}^{\infty}(n+1)^{\alpha-\beta-1-\varepsilon}\left[\sum_{k=0}^{n}\frac{(n+1)^{\frac{-\alpha+\varepsilon}{2}}}{(k+1)^{\frac{2-\alpha+\varepsilon}{2}}}\right]^2 \nonumber \\
&&\geq \frac{\varepsilon}{1+\varepsilon}
\sum_{n=0}^{\infty}(n+1)^{\alpha-\beta-1-\varepsilon}\left[ (n+1)^{\frac{-\alpha+\varepsilon}{2}}\int_{1}^{n+2} \frac{dt}{t^{\frac{2-\alpha+\varepsilon}{2}}} \right]^2 \nonumber \\
&&=\frac{\varepsilon}{1+\varepsilon}
\left(\frac{2}{\alpha-\varepsilon}\right)^2\sum_{n=0}^{\infty}(n+1)^{\alpha-\beta-1-\varepsilon}\left[\left(\frac{n+2}{n+1}\right)^{\frac{\alpha-\varepsilon}{2}}-
\frac{1}{(n+1)^{\frac{\alpha-\varepsilon}{2}}}\right]^2
.\nonumber
\end{eqnarray}

We note that $$\left(\frac{n+2}{n+1}\right)^{\frac{\alpha-\varepsilon}{2}}\rightarrow 1, \: n\rightarrow \infty,$$
and
$$\frac{1}{(n+1)^{\frac{\alpha-\varepsilon}{2}}}\rightarrow 0, \: n\rightarrow \infty.$$

Then we conclude from (\ref{maod}) that there is a constant $\mathcal{N}\in \mathbb{N}$ such that
\begin{eqnarray}\|\mathcal{C}(f)\|_{\mathcal{D}_\beta}^2\geq \frac{\varepsilon}{2(1+\varepsilon)}\left(\frac{2}{\alpha-\varepsilon}\right)^2\sum_{n=\mathcal{N}}^{\infty}(n+1)^{\alpha-\beta-1-\varepsilon}\nonumber.
\end{eqnarray}

If $\mathcal{C}: \mathcal{D}_{\alpha}\rightarrow \mathcal{D}_{\beta}$ is bounded, then there exists a constant $C_1>0$ such that
\begin{equation}\label{c-1}C_1\geq\frac{\|\mathcal{C}(f)\|_{\mathcal{D}_{\beta}}^2}{\|f\|_{\mathcal{D}_\alpha}^2}\geq \frac{\varepsilon}{2(1+\varepsilon)} \left(\frac{2}{\alpha-\varepsilon}\right)^2\sum_{n=\mathcal{N}}^{\infty}(n+1)^{\alpha-\beta-1-\varepsilon}.
\end{equation}
However, when $\varepsilon<\min\{\alpha-\beta, \alpha\}$, we see that $$\sum_{n=\mathcal{N}}^{\infty}(n+1)^{\alpha-\beta-1-\varepsilon}=+\infty.$$

Hence we get that (\ref{c-1}) is a contradiction. This means that the Ces\`aro operator $\mathcal{C}$ is not bounded from $\mathcal{D}_{\alpha}$ into $\mathcal{D}_{\beta}$, if $\alpha>\beta$ and $0<\alpha<2$.

We note that
$$\frac{1}{n+1}=\int_{0}^1t^{n}dt,\: n\in \mathbb{N}_0.$$

Let $\mu$ be a positive Bore measure on $[0, 1)$. For $f=\sum_{n=0}^{\infty}a_n z^n \in \mathcal{H}(\mathbb{D})$, we define the generalized Ces\`aro operators $\mathcal{C}_{\mu}$ as
\begin{equation*} \mathcal{C}_{\mu}(f)(z):=\sum_{n=0}^\infty \left(\mu[n]\sum_{k=0}^n a_k\right)z^n,  z\in \mathbb{D},
\end{equation*}
where $$\mu[n]=\int_{0}^1 t^{n}d\mu(t),\: n\in \mathbb{N}_{0}.$$

In this paper, we first study the question of characterizing measures $\mu$ such that $\mathcal{C}_{\mu}: \mathcal{D}_{\alpha}\rightarrow \mathcal{D}_{\beta}$ is bouded. We obtain a sufficient and necessary condition of $\mu$ for which $\mathcal{C}_{\mu}: \mathcal{D}_{\alpha}\rightarrow \mathcal{D}_{\beta}$ is bouded.

To state our first result, we introduce the notation of generalized Carleson measure on $[0, 1)$.  Let $s>0$ and $\mu$ be a positive Borel measure on $[0,1)$. We say $\mu$ is an $s$-Carleson measure if there is a constant $C_2>0$ such that $$\mu([t, 1))\leq C_2 (1-t)^s,$$ for all $t\in [0, 1)$.

Now we can state the first main result of this paper.
\begin{theorem}\label{main}
Let $0<\alpha, \beta<2$. Then the following statements are equivalent:

{\bf (1)} $\mathcal{C}_{\mu}: \mathcal{D}_{\alpha}\rightarrow \mathcal{D}_{\beta}$ is bounded.

{\bf (2)} $\mu$ is a $[1+\frac{1}{2}(\alpha-\beta)]$-Carleson measure on $[0, 1)$.

{\bf (3)} There is a constant $C_3>0$ such that $$\mu[n] \leq \frac{C_3}{(n+1)^{1+\frac{1}{2}(\alpha-\beta)}},$$ for all $n \in \mathbb{N}_{0}$.
\end{theorem}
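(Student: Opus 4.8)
The plan is to establish the equivalence via the cycle $(3)\Leftrightarrow(2)$ and $(3)\Leftrightarrow(1)$, treating $(2)\Leftrightarrow(3)$ as essentially standard: the equivalence between an $s$-Carleson condition $\mu([t,1))\preceq(1-t)^s$ and the moment condition $\mu[n]\preceq(n+1)^{-s}$ is a well-known fact (it follows by splitting $\int_0^1 t^n\,d\mu(t)$ over the dyadic ``Carleson boxes'' $[1-2^{-j},1-2^{-j-1})$ for the forward direction, and by comparing $\mu([1-\tfrac1{n},1))$ with $\int_0^1 t^n\,d\mu(t)$ using $t^n\succeq c$ on that interval for the reverse). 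So I would state this as a lemma (or cite it) with a one-line indication, and concentrate on the genuinely new content, namely $(1)\Leftrightarrow(3)$.

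For the implication $(3)\Rightarrow(1)$, I would mimic the computation already carried out in the Introduction for the classical Ces\`aro operator, replacing $\frac1{n+1}$ by $\mu[n]$ and using the hypothesis $\mu[n]\preceq(n+1)^{-1-\frac12(\alpha-\beta)}$. Starting from $f=\sum a_n z^n\in\mathcal D_\alpha$, Cauchy–Schwarz as in \eqref{e-1}–\eqref{e-2} gives
\begin{equation*}
\left|\sum_{k=0}^n a_k\right|^2\leq \left(\sum_{k=0}^n (k+1)^{\frac{2-\alpha}{2}}|a_k|^2\right)\left(\sum_{k=0}^n(k+1)^{-\frac{2-\alpha}{2}}\right)\preceq (n+1)^{\frac{\alpha}{2}}\sum_{k=0}^n(k+1)^{\frac{2-\alpha}{2}}|a_k|^2,
\end{equation*}
valid since $0<\alpha<2$. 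Then
\begin{equation*}
\|\mathcal C_\mu(f)\|_{\mathcal D_\beta}^2=\sum_{n=0}^\infty(n+1)^{1-\beta}\mu[n]^2\left|\sum_{k=0}^n a_k\right|^2\preceq \sum_{n=0}^\infty (n+1)^{1-\beta}(n+1)^{-2-(\alpha-\beta)}(n+1)^{\frac{\alpha}{2}}\sum_{k=0}^n(k+1)^{\frac{2-\alpha}{2}}|a_k|^2,
\end{equation*}
and after interchanging the order of summation this becomes $\preceq\sum_{k=0}^\infty (k+1)^{\frac{2-\alpha}{2}}|a_k|^2\sum_{n\ge k}(n+1)^{-1-\frac{\alpha}{2}}$. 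The inner sum is $\asymp (k+1)^{-\alpha/2}$ (compare with an integral as in \eqref{e-4}), which leaves $\preceq\sum_k(k+1)^{1-\alpha}|a_k|^2=\|f\|_{\mathcal D_\alpha}^2$, giving boundedness. One must keep $0<\beta<2$ only insofar as it is needed for the target norm to be the genuine $\mathcal D_\beta$ norm; the decay estimates themselves just use $0<\alpha<2$.

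The more delicate direction is $(1)\Rightarrow(3)$, and this is where I expect the main obstacle: one needs a family of test functions in $\mathcal D_\alpha$ with controlled norm whose images under $\mathcal C_\mu$ detect the size of $\mu[n]$. The natural choice is again a power-type coefficient sequence. Fix $N\in\mathbb N$ and consider $f_N=\sum_{n=0}^\infty a_n z^n$ with $a_n=(n+1)^{-\frac{2-\alpha}{2}}$ for $n\le N$ and $a_n=0$ otherwise (or a smoothly truncated / normalized variant); then $\|f_N\|_{\mathcal D_\alpha}^2=\sum_{n=0}^N(n+1)^{-1}\asymp\log(N+2)$, and the partial sums satisfy $\sum_{k=0}^n a_k\asymp (n+1)^{\alpha/2}$ for $N/2\le n\le N$. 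Since $t\mapsto\mu[n]$ is essentially non-increasing, from $\|\mathcal C_\mu(f_N)\|_{\mathcal D_\beta}^2\le C\|f_N\|_{\mathcal D_\alpha}^2$ one extracts, restricting the sum defining $\|\mathcal C_\mu(f_N)\|_{\mathcal D_\beta}^2$ to $n\in[N/2,N]$,
\begin{equation*}
\mu[N]^2\,\sum_{N/2\le n\le N}(n+1)^{1-\beta}(n+1)^{\alpha}\preceq \|\mathcal C_\mu(f_N)\|_{\mathcal D_\beta}^2\preceq \log(N+2),
\end{equation*}
i.e. $\mu[N]^2 (N+1)^{2+(\alpha-\beta)}\preceq\log(N+2)$, which gives $\mu[N]\preceq (N+1)^{-1-\frac12(\alpha-\beta)}\sqrt{\log(N+2)}$ — off by a logarithmic factor. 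To remove the logarithm one should instead test against the single functions $f(z)=\sum_n (n+1)^{-\frac{2-\alpha}{2}}(n+1)^{-\varepsilon/2}z^n$ (as in the Introduction's counterexample) or, more efficiently, against a lacunary/normalized reproducing-type family: the cleanest route is to let $b\in(0,1)$, put $f_b(z)=(1-bz)^{-\frac{\alpha}{2}}$ whose Taylor coefficients are $\asymp(n+1)^{\frac{\alpha}{2}-1}b^n$ so that $\|f_b\|_{\mathcal D_\alpha}^2\asymp\sum_n(n+1)^{1-\alpha}(n+1)^{\alpha-2}b^{2n}=\sum_n(n+1)^{-1}b^{2n}\asymp\log\frac1{1-b}$ — still logarithmic, so in fact one needs the slightly heavier weight $(1-bz)^{-\frac{\alpha}{2}-\delta}$ to make the $\mathcal D_\alpha$ norm comparable to a clean power of $(1-b)^{-1}$, and then track that the partial sums of the coefficients, hence $\mathcal C_\mu(f_b)$, pick up $\mu[n]$ uniformly over a range of $n\asymp(1-b)^{-1}$. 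Choosing $b=1-\frac1{n}$ and reading off the $n$-th coefficient then yields $\mu[n]\preceq(n+1)^{-1-\frac12(\alpha-\beta)}$ with no logarithmic loss. Carefully choosing this test family and verifying both the norm estimate and the lower bound on the relevant coefficients of $\mathcal C_\mu(f_b)$ is the technical heart of the argument.
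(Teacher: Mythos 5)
Your implication \textbf{(3)}$\Rightarrow$\textbf{(1)} is correct and is essentially identical to the paper's argument (Cauchy--Schwarz on the partial sums, then swap the order of summation), and your sketch of \textbf{(2)}$\Leftrightarrow$\textbf{(3)} is a legitimate, standard route (the paper proves \textbf{(2)}$\Rightarrow$\textbf{(3)} by integration by parts and the Beta function, and closes the cycle via \textbf{(3)}$\Rightarrow$\textbf{(1)}$\Rightarrow$\textbf{(2)} rather than proving \textbf{(3)}$\Rightarrow$\textbf{(2)} directly, but either works). The genuine gap is in the necessity direction. Your first test family $a_n=(n+1)^{-\frac{2-\alpha}{2}}$ (truncated at $N$) has $\|f_N\|_{\mathcal D_\alpha}^2\asymp\log(N+2)$ and, as you correctly observe, only yields $\mu[N]\preceq(N+1)^{-1-\frac12(\alpha-\beta)}\sqrt{\log(N+2)}$; you then acknowledge that the log must be removed and propose a family $(1-bz)^{-\frac\alpha2-\delta}$, but you never fix $\delta$, never verify the norm estimate, and never carry out the lower bound on $\mathcal C_\mu(f_b)$ --- you explicitly defer ``the technical heart of the argument.'' As it stands the proof of \textbf{(1)}$\Rightarrow$\textbf{(3)} (or \textbf{(2)}) is incomplete.

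For comparison, the paper resolves exactly this point by taking the \emph{geometric} coefficients $\widetilde a_n=[\Omega_{\mathbf N}]^{-1/2}b^{n+1}$ for $n\le\mathbf N$ (the truncated, normalized version of $(1-bz)^{-1}$, i.e.\ your family with $\delta=1-\frac\alpha2>0$), whose $\mathcal D_\alpha$-norm squared is the clean power $\sum_n(n+1)^{1-\alpha}b^{2(n+1)}\asymp(1-b^2)^{-(2-\alpha)}$ precisely because $\alpha<2$ --- no logarithm ever appears. The second point you miss is that one need not ``read off the $n$-th coefficient'' to recover $\mu[n]$: it is much simpler to aim at the Carleson condition \textbf{(2)} directly, using the elementary lower bound
\begin{equation*}
\mu[n]=\int_0^1 t^n\,d\mu(t)\geq\int_b^1 t^n\,d\mu(t)\geq b^n\,\mu([b,1))
\end{equation*}
inside every term of $\|\mathcal C_\mu(\widetilde f)\|_{\mathcal D_\beta}^2$, together with $\sum_{k=0}^n b^{k+1}\geq(n+1)b^{n+1}$ for $n\le\mathbf N$. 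Summing over $n$ and invoking $\sum_n n^{c-1}t^{2n}\asymp(1-t^2)^{-c}$ then gives
\begin{equation*}
[\mu([b,1))]^2(1-b^2)^{-(4-\beta)}\preceq(1-b^2)^{-(2-\alpha)},
\end{equation*}
hence $\mu([b,1))\preceq(1-b^2)^{1+\frac12(\alpha-\beta)}$, which is \textbf{(2)}; condition \textbf{(3)} then follows from the already-established implication \textbf{(2)}$\Rightarrow$\textbf{(3)}. You should either carry out this computation with your $\delta$-weighted family (verifying both the norm asymptotics and the coefficient lower bound), or restructure the cycle as the paper does so that the test functions only need to detect $\mu([b,1))$ rather than the individual moments $\mu[n]$.
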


The proof of Theorem \ref{main} will be given in the next section. We shall characterize a measure $\mu$ such that $\mathcal{C}_{\mu}: \mathcal{D}_{\alpha}\rightarrow \mathcal{D}_{\beta}$ is compact in the last section.

\section{Proof of Theorem \ref{main}}

In our proof of Theorem \ref{main}, we need the Beta function defined as follows.
$$B(u,v)=\int_{0}^{1}t^{u-1}(1-t)^{v-1}\,dt,\: u>0,v>0.$$

It is known that
$$B(u,v)=\frac{\Gamma(u)\Gamma{(v)}}{\Gamma(u+v)},$$
where $\Gamma$ is the Gamma function, defined as
$$\Gamma(x)=\int_{0}^{\infty}e^{-t} t^{x-1}\,dt,\: x>0.$$
For more detailed introduction to the Beta function and Gamma function, see \cite{W}.

%We start to prove Theorem \ref{main}.  We first show

{\bf (2)$\Rightarrow$(3)}.  We note that {\bf (3)} is obvious when $n=0$.  We get from integration by parts that, for $n(\geq 1)\in \mathbb{N}$,
\begin{eqnarray}
\mu[n]=\int_{0}^1 t^{n}d\mu(t)&=&\mu([0,1))-n\int_{0}^1 t^{n-1}\mu([0, t))dt \nonumber \\
&=& n\int_{0}^1 t^{n-1}\mu([t, 1))dt.\nonumber
\end{eqnarray}

Since $\mu$ is a $[1+\frac{1}{2}(\alpha-\beta)]$-Carleson measure on $[0, 1)$, then we see that there is a constant $C_4>0$ such that
$$\mu([t,1))\leq C_4 (1-t)^{1+\frac{1}{2}(\alpha-\beta)},$$
for all $t\in [0,1).$

It follows that
\begin{eqnarray}\mu[n] &\leq & C_4 n\int_{0}^1 t^{n-1}(1-t)^{1+\frac{1}{2}(\alpha-\beta)}dt\nonumber \\&=&C_4 n\cdot \frac{\Gamma(n)\Gamma(2+\frac{1}{2}(\alpha-\beta))}{\Gamma(n+2+\frac{1}{2}(\alpha-\beta))}\nonumber \\ &\asymp & \frac{1}{(n+1)^{1+\frac{1}{2}(\alpha-\beta)}}.\nonumber \end{eqnarray}

Here we have used the fact that
$$\Gamma(x) = \sqrt{2\pi} x^{x-\frac{1}{2}}e^{-x}[1+r(x)],\,x>0, $$
where $|r(x)|\leq e^{\frac{1}{12x}}-1.$ Hence {\bf (2)$\Rightarrow$(3)} is true.

{\bf (3)$\Rightarrow$(1)}. Let $f=\sum_{n=0}^{\infty}a_n z^n \in \mathcal{D}_{\alpha}$. By Cauchy's inequality, we see from
$$\mu[n]\leq \frac{C_5}{(n+1)^{1+\frac{1}{2}(\alpha-\beta)}}$$ that, for $n\in \mathbb{N}_0$,
\begin{eqnarray}
\lefteqn{\left |\mu[n]\sum_{k=0}^{n}a_k\right|\leq \frac{C_5}{(n+1)^{1+\frac{1}{2}(\alpha-\beta)}}\left |\sum_{k=0}^{n}{a_k}\right|}  \nonumber \\
&=&\frac{C_5}{(n+1)^{1+\frac{1}{2}(\alpha-\beta)}} \left|\sum_{k=0}^{n} \left[(k+1)^{\frac{2-\alpha}{4}} a_k\right]
\left[(k+1)^{-\frac{2-\alpha}{4}}\right]\right|\nonumber \\
&\leq &\frac{C_5}{(n+1)^{1+\frac{1}{2}(\alpha-\beta)}} \left[\sum_{k=0}^{n} (k+1)^{\frac{2-\alpha}{2}} a_k^2\right]^{\frac{1}{2}}
\left[ \sum_{k=0}^{n} (k+1)^{-\frac{2-\alpha}{2}}\right]^{\frac{1}{2}}. \nonumber
\end{eqnarray}
Consequently, we obtain from (\ref{e-2}) that
\begin{eqnarray}
\lefteqn{\|\mathcal{C}_{\mu}(f)\|_{\mathcal{D}_\beta}\leq \left[\sum_{n=0}^{\infty}(n+1)^{1-\beta}\left |\sum_{k=0}^{n} \mu[n]a_k\right|^2\right]^{\frac{1}{2}}}\nonumber \\
&&\leq C_5\left(\frac{2}{\alpha}\right)^{\frac{1}{2}}\left[\sum_{n=0}^{\infty}\frac{1}{(n+1)^{\frac{2+\alpha}{2}}}\sum_{k=0}^{n} (k+1)^{\frac{2-\alpha}{2}} a_k^2\right]^{\frac{1}{2}}
\nonumber \\
&&=C_5\left(\frac{2}{\alpha}\right)^{\frac{1}{2}}\left[\sum_{k=0}^{\infty}(k+1)^{1-\alpha}\sum_{n=k}^{\infty}\frac{(k+1)^{\frac{\alpha}{2}}}{(n+1)^{\frac{2+\alpha}{2}}}a_k^{2}\right]^{\frac{1}{2}}.
\nonumber
\end{eqnarray}

Then it follows from (\ref{e-4}) that
\begin{equation*}
\|\mathcal{C}_{\mu}(f)\|_{\mathcal{D}_\beta}\leq C_5\frac{\sqrt{2(2+\alpha)}}{\alpha}\|f\|_{\mathcal{D}_{\alpha}}.
\end{equation*}
This proves {\bf (3)$\Rightarrow$(1)}.

{\bf (1)$\Rightarrow$(2)}.  We need the following estimate presented in \cite{Zh}.  Let $0<t<1$. For any $c>0$, we have
\begin{equation}\label{est}\sum_{n=1}^{\infty}n^{c-1}t^{2n}\asymp \frac{1}{(1-t^2)^c}.
\end{equation}

For $0<b<1$, let $\mathbf{N}$ be a natural number. We set $\widetilde{f}=\sum_{n=0}^{\infty}\widetilde{a}_n z^n$ with
$$\widetilde{a}_n=\begin{cases}
[\Omega_\mathbf{N}]^{-\frac{1}{2}}b^{n+1}, \; \text{if} \; n\in [0, \mathbf{N}], \\
0, \; \text{if} \; n\geq \mathbf{N}+1,\\
\end{cases}
$$
where
$$\Omega_\mathbf{N}=\sum_{k=0}^{\mathbf{N}}(k+1)^{1-\alpha}b^{2(k+1)}.$$

Then it is easy to see $\|\widetilde{f}\|_{\mathcal{D}_\alpha}=1.$   We set $\mathbf{S}_{\mathbf{N}}=\{k\in \mathbb{N}_0: k\leq \mathbf{N}\}.$ In view of the boundedness of $\mathcal{C}_{\mu}: \mathcal{D}_{\alpha}\rightarrow \mathcal{D}_{\beta}$, we obtain that
\begin{eqnarray}\label{last}
1 &\succeq & \|\mathcal{C}_{\mu}(\widetilde{f})\|_{\mathcal{D}_\beta}^2 =\sum_{n=0}^{\infty}(n+1)^{1-\beta}\left |\sum_{k=0}^{n}\widetilde{a}_k\int_{0}^1t^{n}d\mu(t) \right|^2 \\
&=&[\Omega_\mathbf{N}]^{-1}\sum_{n=0}^{\infty}(n+1)^{1-\beta}\left[\sum_{k=0}^{n}\chi_{\mathbf{S}_{\mathbf{N}}}(k)b^{k+1}\int_{0}^{1}t^{n}d\mu(t)\right]^{{2}}
\nonumber \\
&\geq &[\Omega_\mathbf{N}]^{-1}\sum_{n=0}^{\infty}(n+1)^{1-\beta}\left[\sum_{k=0}^{n}\chi_{\mathbf{S}_{\mathbf{N}}}(k)b^{k+1}\int_{b}^{1}t^{n}d\mu(t))\right]^{{2}} \nonumber \\
&\geq &[\Omega_\mathbf{N}]^{-1}[\mu([b, 1))]^{2}\sum_{n=0}^{\infty}(n+1)^{1-\beta}\left[\sum_{k=0}^{n}\chi_{\mathbf{S}_{\mathbf{N}}}(k)b^{k+1}\cdot b^{n}\right]^{{2}}\nonumber \\
&=&[\Omega_\mathbf{N}]^{-1}[\mu([b, 1))]^{2}\sum_{n=0}^{\infty}(n+1)^{1-\beta}b^{2n}\cdot \left[\sum_{k=0}^{n}\chi_{\mathbf{S}_{\mathbf{N}}}(k)b^{k+1}\right]^{{2}}.\nonumber
\end{eqnarray}

On the other hand, we note that, when $n\leq  \mathbf{N}$,
\begin{equation*}\label{n-1}\sum_{k=0}^{n}\chi_{\mathbf{S}_{\mathbf{N}}}(k)b^{k+1}\geq (n+1)b^{n+1}.\end{equation*}

Then we get that
\begin{eqnarray}
\sum_{n=0}^{\infty}(n+1)^{1-\beta}b^{2n}\cdot \left[\sum_{k=0}^{n}\chi_{\mathbf{S}_{\mathbf{N}}}(k)b^{k+1}\right]^{{2}}\geq \sum_{n=0}^{\mathbf{N}}(n+1)^{3-\beta}b^{4n+2}.
\nonumber
\end{eqnarray}
It follows from (\ref{last}) that
\begin{equation}\label{end}
1\succeq [\Omega_\mathbf{N}]^{-1}[\mu([b, 1))]^{2}\sum_{n=0}^{\mathbf{N}}(n+1)^{3-\beta}b^{4n+2}.
\end{equation}

Taking $\mathbf{N}\rightarrow \infty$ in (\ref{end}), we see that
$$[\mu([b, 1))]^{2}\sum_{n=0}^{\infty}(n+1)^{3-\beta}b^{4n+2} \preceq \sum_{n=0}^{\infty}(n+1)^{1-\alpha}b^{2(n+1)},$$
for all $b\in [0,1)$. Then we conclude from (\ref{est}) that
$$[\mu([b, 1))]^{2}\frac{1}{(1-b^2)^{4-\beta}}\preceq \frac{1}{(1-b^2)^{2-\alpha}}.$$

This implies that
$$\mu([b, 1))\preceq (1-b^2)^{1+\frac{1}{2}(\alpha-\beta)},$$
for all $0<b<1$. It follows that $\mu$ is a $[1+\frac{1}{2}(\alpha-\beta)]$-Carleson measure on $[0, 1)$ and {\bf (1)$\Rightarrow$(2)} is proved.
The proof of Theorem \ref{main} is now finished.

\section{Compactness of the generalized Ces\`aro operators on Dirichlet-type spaces}
%In this section, we characterize measures $\mu$ such that $\mathcal{C}_{\mu}: \mathcal{D}_{\alpha}\rightarrow \mathcal{D}_{\beta}$ is compact.
For $0<s<\infty$, we say a positive Borel measure $\mu$ on $[0, 1)$ is a vanishing $s$-Carleson measure, if $\mu$ is an $s$-Carleson measure and satisfies that
$$\lim_{t\rightarrow 1^{-}}\frac{\mu([t, 1))}{(1-t)^s}=0.$$

The following theorem is the main result of this section.
\begin{theorem}\label{main-1}
Let $0<\alpha, \beta<2$. Then the following statements are equivalent:

{\bf (1)} $\mathcal{C}_{\mu}: \mathcal{D}_{\alpha}\rightarrow \mathcal{D}_{\beta}$ is compact.

{\bf (2)} $\mu$ is a vanishing $[1+\frac{1}{2}(\alpha-\beta)]$-Carleson measure on $[0, 1)$.

{\bf (3)} $$\mu[n]=o\left((n+1)^{-1-\frac{1}{2}(\alpha-\beta)}\right), \: n\rightarrow \infty.$$
\end{theorem}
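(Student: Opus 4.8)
\emph{Overview and \textbf{(2)}$\Rightarrow$\textbf{(3)}.} The plan is to prove the cycle \textbf{(2)}$\Rightarrow$\textbf{(3)}$\Rightarrow$\textbf{(1)}$\Rightarrow$\textbf{(2)}, mirroring the proof of Theorem \ref{main} with each $O$-estimate replaced by its $o$-counterpart. Throughout I will use the standard criterion that a bounded operator $T:\mathcal{D}_{\alpha}\to\mathcal{D}_{\beta}$ is compact if and only if $\|Tf_{j}\|_{\mathcal{D}_{\beta}}\to0$ for every sequence $\{f_{j}\}$ bounded in $\mathcal{D}_{\alpha}$ with $f_{j}\to0$ uniformly on compact subsets of $\mathbb{D}$; this is routine, since $\mathcal{D}_{\alpha}$-bounded sets are normal families and norm convergence in $\mathcal{D}_{\alpha}$ forces locally uniform, hence coefficientwise, convergence. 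Note also that \textbf{(3)} here implies statement \textbf{(3)} of Theorem \ref{main}, so $\mathcal{C}_{\mu}$ is automatically bounded, and by Theorem \ref{main} statement \textbf{(1)} forces $\mu$ to be a $[1+\frac{1}{2}(\alpha-\beta)]$-Carleson measure; thus in each implication only the ``vanishing'' refinement has to be checked. For \textbf{(2)}$\Rightarrow$\textbf{(3)} I would reuse the identity $\mu[n]=n\int_{0}^{1}t^{n-1}\mu([t,1))\,dt$ ($n\ge1$, the case $n=0$ being trivial): given $\varepsilon>0$, pick $\delta\in(0,1)$ with $\mu([t,1))\le\varepsilon(1-t)^{1+\frac{1}{2}(\alpha-\beta)}$ for $t\in[\delta,1)$; the integral over $[0,\delta)$ is at most $\mu([0,1))\delta^{n}$, which decays exponentially and so is $o\big((n+1)^{-1-\frac{1}{2}(\alpha-\beta)}\big)$, while the integral over $[\delta,1)$ is at most $\varepsilon\,n\,B\big(n,2+\tfrac{1}{2}(\alpha-\beta)\big)\asymp\varepsilon(n+1)^{-1-\frac{1}{2}(\alpha-\beta)}$ by the Stirling estimate already used in Theorem \ref{main}. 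Hence $\limsup_{n\to\infty}(n+1)^{1+\frac{1}{2}(\alpha-\beta)}\mu[n]\le C\varepsilon$, and letting $\varepsilon\to0$ gives \textbf{(3)}.

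\emph{\textbf{(3)}$\Rightarrow$\textbf{(1)}.} Let $f_{j}=\sum_{n}a_{n}^{(j)}z^{n}$ be bounded in $\mathcal{D}_{\alpha}$, say $\|f_{j}\|_{\mathcal{D}_{\alpha}}\le M$, with $f_{j}\to0$ locally uniformly, so $a_{n}^{(j)}\to0$ for each fixed $n$. Given $\varepsilon>0$, use \textbf{(3)} to choose $N$ with $\mu[n]\le\varepsilon(n+1)^{-1-\frac{1}{2}(\alpha-\beta)}$ for $n\ge N$, and split $\|\mathcal{C}_{\mu}f_{j}\|_{\mathcal{D}_{\beta}}^{2}=\sum_{n<N}+\sum_{n\ge N}$. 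On the tail I would run verbatim the Cauchy--Schwarz computation of \textbf{(3)}$\Rightarrow$\textbf{(1)} in Theorem \ref{main}, using (\ref{e-2}) and (\ref{e-4}), which gives $\sum_{n\ge N}(n+1)^{1-\beta}|\mu[n]|^{2}\big|\sum_{k=0}^{n}a_{k}^{(j)}\big|^{2}\le\varepsilon^{2}\,\frac{2(2+\alpha)}{\alpha^{2}}\,\|f_{j}\|_{\mathcal{D}_{\alpha}}^{2}\le\varepsilon^{2}\,\frac{2(2+\alpha)}{\alpha^{2}}M^{2}$; the point is that the cross terms generated by Cauchy--Schwarz reassemble into the full norm $\|f_{j}\|_{\mathcal{D}_{\alpha}}^{2}$, so the factor $\varepsilon^{2}$ survives. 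On the head, $\mu[n]\le\mu([0,1))$ and $\big|\sum_{k=0}^{n}a_{k}^{(j)}\big|\le\sum_{k=0}^{N-1}|a_{k}^{(j)}|\to0$ as $j\to\infty$, so $\sum_{n<N}\to0$. Therefore $\limsup_{j}\|\mathcal{C}_{\mu}f_{j}\|_{\mathcal{D}_{\beta}}^{2}\le\varepsilon^{2}\frac{2(2+\alpha)}{\alpha^{2}}M^{2}$, and letting $\varepsilon\to0$ shows $\|\mathcal{C}_{\mu}f_{j}\|_{\mathcal{D}_{\beta}}\to0$, i.e.\ $\mathcal{C}_{\mu}$ is compact.

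\emph{\textbf{(1)}$\Rightarrow$\textbf{(2)}.} Only the vanishing condition remains. For $b\in[\frac{1}{2},1)$ set $g_{b}(z)=\sum_{n=0}^{\infty}b^{n+1}z^{n}=\frac{b}{1-bz}$ and $f_{b}=g_{b}/\|g_{b}\|_{\mathcal{D}_{\alpha}}$, so $\|f_{b}\|_{\mathcal{D}_{\alpha}}=1$. By (\ref{est}), $\|g_{b}\|_{\mathcal{D}_{\alpha}}^{2}=\sum_{n}(n+1)^{1-\alpha}b^{2(n+1)}\asymp(1-b^{2})^{-(2-\alpha)}\to\infty$, while $|g_{b}(z)|\le(1-|z|)^{-1}$ uniformly in $b$; hence $\{f_{b}\}$ is bounded in $\mathcal{D}_{\alpha}$ and $f_{b}\to0$ uniformly on compact subsets as $b\to1^{-}$, so $\|\mathcal{C}_{\mu}f_{b}\|_{\mathcal{D}_{\beta}}\to0$ by \textbf{(1)}. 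On the other hand, repeating the chain of inequalities of \textbf{(1)}$\Rightarrow$\textbf{(2)} in Theorem \ref{main} directly for $g_{b}$ (with no truncation), using $\mu[n]\ge b^{n}\mu([b,1))$, $\sum_{k=0}^{n}b^{k+1}\ge(n+1)b^{n+1}$ and (\ref{est}), yields $\|\mathcal{C}_{\mu}g_{b}\|_{\mathcal{D}_{\beta}}^{2}\succeq[\mu([b,1))]^{2}(1-b^{2})^{-(4-\beta)}$, whence $\|\mathcal{C}_{\mu}f_{b}\|_{\mathcal{D}_{\beta}}^{2}\succeq\big(\mu([b,1))(1-b^{2})^{-1-\frac{1}{2}(\alpha-\beta)}\big)^{2}$. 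Combining, $\mu([b,1))/(1-b^{2})^{1+\frac{1}{2}(\alpha-\beta)}\to0$ as $b\to1^{-}$, and since $1-b^{2}\asymp1-b$ there, $\mu$ is a vanishing $[1+\frac{1}{2}(\alpha-\beta)]$-Carleson measure, closing the cycle.

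\emph{Expected main obstacle.} No single estimate is hard: each reduces to a computation already carried out for Theorem \ref{main}. The delicate point is organizational, in \textbf{(3)}$\Rightarrow$\textbf{(1)}: one must first reduce compactness to the sequential criterion, and then perform the head/tail split at an index $N$ chosen so that on the tail the small parameter $\varepsilon$ multiplies only the uniformly bounded quantity $\|f_{j}\|_{\mathcal{D}_{\alpha}}^{2}$, while on the finite head one exploits coefficientwise convergence. The other point that is genuinely new relative to Theorem \ref{main} is verifying that the normalized reproducing-kernel-type test functions $f_{b}$ tend to $0$ locally uniformly, which is precisely what lets compactness be applied in \textbf{(1)}$\Rightarrow$\textbf{(2)}.
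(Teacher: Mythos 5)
Your proposal is correct, and two of the three implications track the paper closely: for \textbf{(2)}$\Rightarrow$\textbf{(3)} the paper simply says ``minor modifications of the argument of Theorem \ref{main}'', and your $\delta$-splitting of $n\int_0^1 t^{n-1}\mu([t,1))\,dt$ is precisely that modification written out; for \textbf{(1)}$\Rightarrow$\textbf{(2)} the paper's test functions $\widehat f_b$ with coefficients $(1-b^2)^{\frac{2-\alpha}{2}}b^{n+1}$ are your normalized kernels $f_b=g_b/\|g_b\|_{\mathcal{D}_\alpha}$ up to a bounded factor, fed into the same lower bound via weak convergence to $0$ rather than your ``bounded and locally uniformly null'' formulation --- the same argument. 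The one place you genuinely diverge is \textbf{(3)}$\Rightarrow$\textbf{(1)}. The paper avoids the sequential compactness criterion entirely: it truncates the \emph{output}, defining the finite-rank operators $\mathcal{C}_{\mu}^{[\mathfrak{N}]}(f)=\sum_{n=0}^{\mathfrak{N}}\bigl(\mu[n]\sum_{k=0}^n a_k\bigr)z^n$, and uses exactly your tail estimate (the Cauchy--Schwarz computation with the factor $\epsilon^2$ surviving) to show $\|\mathcal{C}_{\mu}-\mathcal{C}_{\mu}^{[\mathfrak{N}]}\|_{\mathcal{D}_\alpha\to\mathcal{D}_\beta}\preceq\epsilon$ for $\mathfrak{N}$ large, so $\mathcal{C}_{\mu}$ is a norm limit of finite-rank operators and hence compact. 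Your route instead tests against sequences, requiring the head/tail split in $n$ together with coefficientwise convergence on the head, and it leans on the (standard but unproved) equivalence of compactness with the sequential criterion on these spaces; the paper's route needs only that the compacts are norm-closed. Both hinge on the identical estimate, so the difference is organizational --- the paper's version is slightly more economical, while yours has the merit that the sequential machinery it sets up is the same tool both you and the paper must invoke anyway for \textbf{(1)}$\Rightarrow$\textbf{(2)}.
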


\begin{proof}[Proof of Theorem \ref{main-1}]
First note that, by minor modifications of the arguments of
{\bf (2)$\Rightarrow$(3)} in the proof of Theorem \ref{main}, we can similarly show {\bf (2)$\Rightarrow$(3)} of Theorem \ref{main-1}.

We proceed to prove {\bf (3)$\Rightarrow$(1)}. For any $f=\sum_{n=0}^{\infty}a_n z^n \in \mathcal{D}_{\alpha}$. Let $\mathfrak{N}\in \mathbb{N}$, we consider
\begin{equation*} \mathcal{C}_{\mu}^{[\mathfrak{N}]}(f)(z):=\sum_{n=0}^{\mathfrak{N}} \left(\mu[n]\sum_{k=0}^n a_k\right)z^n,  z\in \mathbb{D}.
\end{equation*}
Then we see that $\mathcal{C}_{\mu}^{[\mathfrak{N}]}$ is a finite rank operator, hence $\mathcal{C}_{\mu}^{[\mathfrak{N}]}$ is compact from $\mathcal{D}_{\alpha}$ into
$\mathcal{D}_{\beta}$.

In view of $$\mu[n]=o\left((n+1)^{-1-\frac{1}{2}(\alpha-\beta)}\right), \: n\rightarrow \infty.$$
We see that, for any $\epsilon>0$, there is an $N_0 \in \mathbb{N}$ such that
$$\mu[n]\leq \epsilon (n+1)^{-1-\frac{1}{2}(\alpha-\beta)},$$
for all $n>N_0$.

Note that
\begin{eqnarray}\|(\mathcal{C}_{\mu}-\mathcal{C}_{\mu}^{[\mathfrak{N}]})(f)\|_{\mathcal{D}_{\beta}}^2=\sum_{n=\mathfrak{N}+1}^{\infty}(n+1)^{1-\beta} \left|\mu[n]\sum_{k=0}^n a_k\right|^2.\nonumber
\end{eqnarray}

When $\mathfrak{N}>N_0$, we get that
\begin{eqnarray}\|(\mathcal{C}_{\mu}-\mathcal{C}_{\mu}^{[\mathfrak{N}]})(f)\|_{\mathcal{D}_{\beta}}^2\preceq {\epsilon}^2 \sum_{n=\mathfrak{N}+1}^{\infty}(n+1)^{1-\beta} \left|\frac{1}{(n+1)^{1+\frac{1}{2}(\alpha-\beta)}}\sum_{k=0}^n a_k\right|^2.\nonumber\end{eqnarray}

Consequently, by using the arguments of {\bf (3)$\Rightarrow$(1)} in the proof of Theorem \ref{main}, we see that
\begin{eqnarray}\|(\mathcal{C}_{\mu}-\mathcal{C}_{\mu}^{[\mathfrak{N}]})(f)\|_{\mathcal{D}_{\beta}}^2\preceq {\epsilon}^2 \|f\|_{\mathcal{D}_{\alpha}}^2,\nonumber\end{eqnarray}
holds for any $f\in\mathcal{D}_{\alpha}$.
Hence we see
\begin{eqnarray}\|\mathcal{C}_{\mu}-\mathcal{C}_{\mu}^{[\mathfrak{N}]}\|_{\mathcal{D}_{\alpha}\rightarrow \mathcal{D}_{\beta}}\preceq {\epsilon},\nonumber\end{eqnarray}
when $\mathfrak{N}>N_0$.
Here \begin{eqnarray}\|T\|_{\mathcal{D}_{\alpha}\rightarrow \mathcal{D}_{\beta}}=\sup_{f(\neq 0)\in \mathcal{D}_{\alpha}}\frac{\|T(f)\|_{\mathcal{D}_{\beta}}}{\|f\|_{\mathcal{D}_{\alpha}}},\end{eqnarray}
where $T$ is a linear bounded operator from $\mathcal{D}_{\alpha}$ into
$\mathcal{D}_{\beta}$.  This means that $\mathcal{C}_{\mu}$ is compact from $\mathcal{D}_{\alpha}$ into
$\mathcal{D}_{\beta}$ and {\bf (3)$\Rightarrow$(1)} is proved.

Finally, we show that {\bf (1)$\Rightarrow$(2)}. For $0<b<1$.  We set  $\widehat{f}_b=\sum_{n=0}^{\infty}\widehat{a}_n z^n$ with
$$\widehat{a}_n=(1-b^2)^{\frac{2-\alpha}{2}}b^{n+1},\: n\in \mathbb{N}_0.
$$

We see from (\ref{est}) that $\|\widehat{f}_b\|_{\mathcal{D}_{\alpha}}\asymp1.$ By the fact that $\lim\limits_{b\rightarrow 1^{-1}}\widehat{f}_b=0$ for any $z\in \mathbb{D}$, we conclude that
$\widehat{f}_b$ is convergent weakly to $0$ in $\mathcal{D}_{\alpha}$ as $b\rightarrow 1^{-}$. Since $\mathcal{C}_{\mu}: \mathcal{D}_{\alpha} \rightarrow \mathcal{D}_{\beta}$ is compact, we see
\begin{equation}\label{com-0}\lim_{b\rightarrow {1^{-}}} \|\mathcal{C}_{\mu}(\widehat{f}_b)\|_{\mathcal{D}_{\beta}}=0.\end{equation}

On the other hand, we have
\begin{eqnarray}\label{com-1}
 \|\mathcal{C}_{\mu}(\widehat{f}_b)\|_{\mathcal{D}_\beta}^2 &=&\sum_{n=0}^{\infty}(n+1)^{1-\beta}\left |\sum_{k=0}^{n}\widehat{a}_k\int_{0}^1t^{n}d\mu(t) \right|^2 \\
&=&(1-b^2)^{2-\alpha}\sum_{n=0}^{\infty}(n+1)^{1-\beta}\left[\sum_{k=0}^{n}b^{k+1}\int_{0}^{1}t^{n}d\mu(t)\right]^{{2}}
\nonumber \\
&\geq &(1-b^2)^{2-\alpha}\sum_{n=0}^{\infty}(n+1)^{1-\beta}\left[\sum_{k=0}^{n}b^{k+1}\int_{b}^{1}t^{n}d\mu(t))\right]^{{2}} \nonumber \\
&\geq &(1-b^2)^{2-\alpha}[\mu([b, 1))]^{2}\sum_{n=0}^{\infty}(n+1)^{1-\beta}\left[\sum_{k=0}^{n}b^{k+1}\cdot b^{n}\right]^{{2}}\nonumber \\
&=&(1-b^2)^{2-\alpha}[\mu([b, 1))]^{2}\sum_{n=0}^{\infty}(n+1)^{1-\beta}b^{2n}\cdot \left(\sum_{k=0}^{n}b^{k+1}\right)^{{2}}.\nonumber
\end{eqnarray}

Also, we have
\begin{eqnarray}\label{com-2}
\lefteqn{\sum_{n=0}^{\infty}(n+1)^{1-\beta}b^{2n}\cdot \left(\sum_{k=0}^{n}b^{k+1}\right)^{{2}}}\\&&\geq \sum_{n=0}^{\infty}(n+1)^{1-\beta}b^{2n}\cdot[(n+1)b^{n+1}]^{{2}}\nonumber \\
&&=\sum_{n=0}^{\infty}(n+1)^{3-\beta}b^{4n+2}\asymp \frac{1}{(1-b^2)^{4-\beta}}\nonumber.
\end{eqnarray}
Combining (\ref{com-1}) and (\ref{com-2}), we see
\begin{eqnarray}
 \mu([b, 1))\preceq \|\mathcal{C}_{\mu}(\widehat{f}_b)\|_{\mathcal{D}_\beta}(1-b^2)^{1+\frac{1}{2}(\alpha-\beta)}\nonumber.
\end{eqnarray}

It follows from (\ref{com-0}) that
 \begin{eqnarray}
\lim_{b\rightarrow 1^{-}}\frac{ \mu([b, 1))}{(1-b)^{1+\frac{1}{2}(\alpha-\beta)}}=0\nonumber.
\end{eqnarray}
This proves {\bf (1)$\Rightarrow$(2)} and the proof of Theorem \ref{main-1} is complete.
\end{proof}

%We end this paper by the following general
%\begin{problem}
%Characterize measures $\mu$ such that $\mathcal{C}_{\mu}$ boundedly (compactly) maps one analytic function space $A$ into another space $B$.
%\end{problem} For example, $A, B$ can be considered as the Hardy spaces, Bergman spaces, or Bloch spaces and so on.

\section*{acknowledgement}
We thank the referee for useful suggestions on this paper.

%    Insert the bibliography data here.

\end{document}